\tikzstyle{vertex}=[circle,fill=black,inner sep=1pt]
\tikzstyle{vertrect}=[draw,rectangle,inner sep=1pt]
\tikzstyle{vertdia}=[draw,diamond,inner sep=1pt]
\theoremstyle{plain}
      \newtheorem{theorem}{Theorem}
      \newtheorem{lemma}[theorem]{Lemma}
      \newtheorem{corollary}[theorem]{Corollary}
      \newtheorem{conjecture}{Conjecture}
            \theoremstyle{definition}
      \newtheorem{definition}[theorem]{Definition}
      \theoremstyle{remark}
\title{Multicolor Sunflowers}
\author{
{\Large{Dhruv Mubayi}}\thanks{
\footnotesize {Department of Mathematics, Statistics, and Computer Science, University of Illinois, Chicago, IL 60607, \texttt{Email:mubayi@uic.edu}}. Research partially supported by NSF grants DMS-0969092 and DMS-1300138}
\and
{\Large{Lujia Wang}}\thanks{
\footnotesize {Department of Mathematics, Statistics, and Computer Science, University of Illinois, Chicago, IL 60607, \texttt{Email:lwang203@uic.edu}}.}}
\date{\today}
\begin{document}
\maketitle
\begin{abstract}
A sunflower is a collection of distinct sets such that the intersection of any two of them is the same as the common intersection $C$ of all of them, and $|C|$ is smaller than each of the sets. A longstanding conjecture due to Erd\H{o}s and Szemer\'{e}di states that the maximum size of a family of subsets of $[n]$ that contains no sunflower of fixed size $k>2$ is exponentially smaller than $2^n$ as $n\rightarrow\infty$. We consider this problem for multiple families. In particular, we obtain sharp or almost sharp bounds on the sum and product of $k$ families  of subsets of $[n]$ that together contain no sunflower of size $k$ with one set from each family. For the sum, we prove that the maximum is 
$$(k-1)2^n+1+\sum_{s=n-k+2}^{n}\binom{n}{s}$$
for all $n \ge k \ge 3$, and for the $k=3$ case of the product, we prove that it is between 
$$\left(\frac{1}{8}+o(1)\right)2^{3n}\qquad \hbox{ and } \qquad (0.13075+o(1))2^{3n}.$$
\end{abstract} 

\section{Introduction}
 Throughout the paper, we write $[n]=\{1,\ldots,n\}$, $2^{[n]}=\{S:S\subset[n]\}$ and $\binom{[n]}{s}=\{S:S\subset[n], |S|=s\}$. A \emph{sunflower} (or \emph{strong $\Delta$-system}) with $k$ petals is a collection of $k$ sets ${\cal S}=\{S_1,\ldots,S_k\}$ such that $S_i\cap S_j=C$ for all $i\neq j$, and $S_i\setminus C\neq\emptyset$ for all $i\in[k]$. The common intersection $C$ is called the \emph{core} of the sunflower and the sets $S_i\setminus C$ are called the \emph{petals}. In 1960, Erd\H{o}s and Rado~\cite{ER60} proved a fundamental result regarding the existence of sunflowers in a large family of sets of uniform size, which is now referred to as the \emph{sunflower lemma}. It states that if $\cal A$ is a family of sets of size $s$ with $|{\cal A}|>s!(k-1)^s$, then $\cal A$ contains a sunflower with $k$ petals. Later in 1978, Erd\H{o}s and Szemer\'{e}di~\cite{ES78} gave the following upper bound when the underlying set has $n$ elements.
\begin{theorem}[Erd\H{o}s, Szemer\'{e}di~\cite{ES78}]
There exists a constant $c$ such that if ${\cal A}\subset 2^{[n]}$ with $|{\cal A}|>2^{n-c\sqrt{n}}$ then $\cal A$ contains a sunflower with $3$ petals.\label{thmES}
\end{theorem}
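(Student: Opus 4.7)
The plan is to bound $|\mathcal{A}|$ by combining the uniform Erd\H{o}s-Rado sunflower lemma with a size-stratification argument, and then handling the difficult ``middle'' sizes by an additional averaging or partitioning trick.

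First, I would stratify the family by cardinality: let $\mathcal{A}_s = \mathcal{A} \cap \binom{[n]}{s}$ for $0 \leq s \leq n$. For the two extreme regimes, the sum is easily controlled. For sizes $s \leq c_1\sqrt{n}$ or $s \geq n - c_1\sqrt{n}$ with an appropriate constant $c_1$, the trivial upper bound $|\mathcal{A}_s| \leq \binom{n}{s}$ already gives $\sum \binom{n}{s} \leq 2^{O(\sqrt{n}\log n)}$, well below $2^{n-c\sqrt{n}}$. Moreover, in the regime where $s!\cdot 2^s \leq \binom{n}{s}$ (roughly $s \leq \sqrt{n/2}$ when $k=3$), the Erd\H{o}s-Rado lemma yields the even tighter bound $|\mathcal{A}_s| \leq s!\cdot 2^s$, though in that range the trivial bound already suffices.

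Second, and most importantly, I would attack the middle regime $c_1\sqrt{n} < s < n - c_1\sqrt{n}$, where neither the Erd\H{o}s-Rado bound nor a crude counting argument directly succeeds, since $\binom{n}{s}$ can be as large as $\binom{n}{n/2} \sim 2^n/\sqrt{n}$. Here one must exploit the absence of a 3-sunflower to save an extra factor of $2^{c\sqrt{n}}$. A natural plan is via random partitioning: split $[n]$ into a small number of blocks of size about $\sqrt{n}$ and study the projections of sets in $\mathcal{A}_s$ onto these blocks. If many sets share the same projection on some block, the no-sunflower hypothesis forces their projections on the complementary blocks to be ``spread out,'' enabling an induction on $n$ that preserves a $\sqrt{n}$-type exponent. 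An alternative is a direct core-counting argument: for each candidate core $C$, bound the number of sets in $\mathcal{A}$ extending $C$ with pairwise disjoint petals, noting that once this count reaches three we have a sunflower.

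The main obstacle is precisely this middle regime. Showing $|\mathcal{A}_s| \leq \binom{n}{s}/2^{c\sqrt{n}}$ for $s \approx n/2$, given only the absence of a 3-sunflower, is the crux of the theorem and the entire reason why the bound $2^{n-c\sqrt{n}}$ appears rather than merely $2^n$. The saving of $2^{c\sqrt{n}}$ must come from a global structural argument; I expect local arguments about pairs or triples of sets to yield only polynomial-in-$n$ improvements. The likely cleanest route is an induction on $n$ via ground-set splitting, with delicate bookkeeping of how many sunflower-free extensions each projection ``type'' admits, so that the $\sqrt{n}$ factor in the exponent survives the recursion. Calibrating the block sizes and the constant $c$ so that the recursion closes is the place I would expect the technical difficulty to concentrate.
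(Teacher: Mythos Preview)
The paper does not prove this theorem. Theorem~\ref{thmES} is quoted as a known result of Erd\H{o}s and Szemer\'{e}di~\cite{ES78}; the paper merely invokes it (for example, in the reduction at the start of the proof of Theorem~\ref{thm3} to argue that $|\mathcal{A}_1\cap\mathcal{A}_2\cap\mathcal{A}_3|=o(2^n)$). There is therefore no proof in the paper to compare your proposal against.

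As for the proposal itself: it is not a proof but a plan in which the decisive step is left open. You correctly observe that the tails $s\le c_1\sqrt{n}$ and $s\ge n-c_1\sqrt{n}$ are harmless and that the whole content of the theorem lies in saving a factor $2^{c\sqrt{n}}$ on the middle layers. However, your two suggested mechanisms for that saving---a random block partition with induction on $n$, or a ``core-counting'' argument---are stated only as hopes (``I would expect,'' ``the likely cleanest route''), with no concrete inequality or recursion written down that would actually close. In particular, the core-counting idea as described (bounding, for each candidate core $C$, the number of sets extending $C$ with pairwise disjoint petals) yields only a pointwise bound of~$2$ per core, which by itself gives nothing better than polynomial savings; and the block-partition scheme needs a specific statement of what structure is forced on the projections and why the recursion preserves a $\sqrt{n}$ exponent, neither of which you supply. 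So the proposal identifies the obstacle accurately but does not overcome it.
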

In the same paper, they conjectured  that for $n$ sufficiently large, the maximum number of sets in a family ${\cal A}\subset 2^{[n]}$ with no sunflowers with three petals is at most $(2-\epsilon)^n$ for some absolute constant $\epsilon>0$. This conjecture remains open, and is closely related to the algorithmic problem of matrix multiplication, see~\cite{ASU13}.

Similar problems have been studied for systems of sets where only the size (rather than the actual set) of pairwise intersections is fixed. A \emph{weak $\Delta$-system} of size $k$ is a collection of $k$ sets ${\cal S}=\{S_1,\ldots,S_k\}$ such that $|S_i\cap S_j|=|S_1\cap S_2|$ for all $i\neq j$. Thus, a sunflower is a weak $\Delta$-system but not vice versa. In 1973, Deza~\cite{D73} gave the criterion for a weak $\Delta$-system to be a sunflower: If ${\cal F}$ is an $s$-uniform weak $\Delta$-system with $|{\cal F}|>s^2-s+1$, then $\cal F$ is a sunflower. The lower bound can be achieved only if the projective plane $PG(2,s)$ exists. This was shown by van Lint~\cite{L73} later in the same year. Erd\H os posed the problem of determining the largest size of a family ${\cal A} \subset 2^{[n]}$ that contains no weak $\Delta$-system of a fixed size.
The problem was solved by Frankl and R\"{o}dl~\cite{FR87} in 1987. They proved that given $k\ge 3$, there exists a constant $\epsilon=\epsilon(k)$ so that for every ${\cal A}\subset 2^{[n]}$ with $|{\cal A}|>(2-\epsilon)^n$, $\cal A$ contains a weak $\Delta$-system of size $k$. 

A natural way to generalize problems in extremal set theory is to  consider versions for multiple families or so-called multicolor or cross-intersecting problems. Beginning with the famous Erd\H{o}s-Ko-Rado theorem~\cite{EKR61}, which states that an intersecting family of $k$-element subsets of $[n]$ has size at most $\binom{n-1}{k-1}$, provided $n\geq 2k$, several generalizations were proved for multiple families that are cross-intersecting. In particular, Hilton~\cite{H77} showed in 1977 that if $t$ families ${\cal A}_1,\ldots,{\cal A}_t\subset\binom{[n]}{k}$ are cross intersecting
(meaning that $A_i \cap A_j \neq\emptyset$ for all $(A_i, A_j) \in {\cal A}_i\times {\cal A}_j$) and  if $n/k\leq t$, then $\sum_{i=1}^t|{\cal A}_i|\leq t\binom{n-1}{k-1}$. On the other hand, results of Pyber~\cite{P86} in 1986, that were later slightly refined by Matsumoto and Tokushige~\cite{MT89} and Bey~\cite{B05}, showed that if two families ${\cal A}\subset\binom{[n]}{k}$, ${\cal B}\subset\binom{[n]}{l}$ are cross-intersecting and $n\geq\max\{2k,2l\}$, then $|{\cal A}||{\cal B}|\le\binom{n-1}{k-1}\binom{n-1}{l-1}$. These are the first results about bounds on sums and products of the size of cross-intersecting families. More general problems were considered recently, for example for cross $t$-intersecting families (i.e. pair of sets from distinct families have intersection of size at least $t$) and $r$-cross intersecting families (any $r$ sets have a nonempty intersection where each set is picked from a distinct family) and labeled crossing intersecting families, see~\cite{B08,FT11,FLST14}.  A more systematic study of multicolored extremal problems (with respect to the sum of the sizes of the families) was begun by Keevash, Saks, Sudakov, and Verstra\"ete~\cite{KSSV}, and continued in~\cite{BKS, KS}. 
Cross-intersecting versions of Erd\H os' problem on weak $\Delta$-systems mentioned above (for the product of the size of  two families) were  proved by Frankl and R\"odl~\cite{FR87} and by the first author and R\"odl~\cite{MR}.

In this note, we consider  multicolor versions of sunflower theorems. Quite surprisingly, these basic questions appear not to have been studied in the literature.

\begin{definition}
 Given families of sets ${\cal A}_i\subset 2^{[n]}$ for $i=1,\ldots,k$, a \emph{multicolor sunflower} with $k$ petals is a collection of sets $A_i\in {\cal A}_i$, $i=1,\ldots,k$, such that $A_i\cap A_j=C$ for all $i\neq j$, and $A_i\setminus C\neq\emptyset$, for all $i\in[k]$.
 Say that ${\cal A}_1, \ldots, {\cal A}_k$ is sunflower-free if it contains no multicolor sunflower with $k$ petals.
  \end{definition}
  
  For any $k$ families that are sunflower-free, the problem of upper bounding the size of any  single family is uninteresting, since there is no restriction on a particular family. So we are interested in the sum and product of the sizes of these families.

Given integers $n$ and $k$, let
\begin{equation*}
{\cal F}(n,k)=\{\{{\cal A}_i\}_{i=1}^k:{\cal A}_i\subset 2^{[n]} \text{ for  $i\in [k]$  and  ${\cal A}_1,{\cal A}_2,\ldots,{\cal A}_k$ is sunflower-free}\}.
\end{equation*}
We define
$$S(n,k):=\max_{\{{\cal A}_i\}_{i=1}^k\in{\cal F}(n,k)}\sum_{i=1}^k|\mathcal{A}_i|,$$
and
$$P(n,k):=\max_{\{{\cal A}_i\}_{i=1}^k\in{\cal F}(n,k)}\prod_{i=1}^k|\mathcal{A}_i|.$$

\section{Main Results}
Our two main results are sharp or nearly sharp estimates on $S(n,k)$ and $P(n,3)$.  By Theorem~\ref{thmES} we obtain that 
$$S(n,3)\le 2\cdot2^n+2^{n-c\sqrt{n}}.$$
Indeed, if $|{\cal A}|+|{\cal B}|+|{\cal C }|$ is larger than the RHS above then $|{\cal A}\cap{\cal B}\cap{\cal C}|>2^{n-c\sqrt{n}}$ by the pigeonhole principle and we find a sunflower in the intersection which contains a multicolor sunflower. Our first result reduces the term $2^{n-c\sqrt{n}}$ to obtain an exact result.
\begin{theorem}For $n\ge k\ge 3$
$$S(n,k)= (k-1)2^n+1+\sum_{s=n-k+2}^n\binom{n}{s}.$$\label{thm2}
\end{theorem}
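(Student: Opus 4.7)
The lower bound comes from the construction $\mathcal{A}_1 = \cdots = \mathcal{A}_{k-1} = 2^{[n]}$ and $\mathcal{A}_k = \{\emptyset\} \cup \{A \subseteq [n] : |A| \geq n-k+2\}$. Any candidate multicolor sunflower $(A_1,\ldots,A_k)$ with core $C$ requires $A_k \setminus C \neq \emptyset$ together with $k-1$ pairwise disjoint nonempty petals $A_i \setminus C \subseteq [n] \setminus A_k$ for $i<k$. The first rules out $A_k = \emptyset$, while the second demands $|[n]\setminus A_k| \geq k-1$, ruling out $|A_k| \geq n-k+2$. Summing the family sizes gives the stated value.

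For the upper bound, set $c(A) := |\{i : A \in \mathcal{A}_i\}|$, so $\sum_i |\mathcal{A}_i| = \sum_A c(A)$. Applying the trivial bound $c(A) \le k$ to the $1 + \sum_{s\ge n-k+2}\binom{n}{s}$ sets that are empty or of size at least $n-k+2$ reduces the desired inequality to the key claim
\[
\sum_{\substack{A\subseteq [n]\\ 1\le |A|\le n-k+1}} c(A) \;\le\; (k-1)\sum_{s=1}^{n-k+1}\binom{n}{s},
\]
i.e., the average of $c(A)$ over the ``middle'' sets (those with $1 \leq |A| \leq n-k+1$) is at most $k-1$. Equivalently, the number of missing incidences $(A,i)$ with $A$ middle and $A \notin \mathcal{A}_i$ is at least the number of middle sets.

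I plan to prove this via an injection $\phi$ from middle sets into the multiset of missing incidences. If $A$ is middle and $A \notin \mathcal{A}_i$ for some $i$, set $\phi(A)=(A,i(A))$ with $i(A)$ the smallest such index. Otherwise $A \in \bigcap_i \mathcal{A}_i$; then attempt to extend $A$ to a multicolor sunflower: with core $A$ and singleton petals $\{x_1\},\dots,\{x_k\}$ if $|A|\le n-k$, or with core $A\setminus\{a\}$ and singletons drawn from the $k$-element set $[n]\setminus(A\setminus\{a\})$ if $|A|=n-k+1$. The required assignment of petal elements to families is exactly a system of distinct representatives for the neighbourhoods $R_i = \{x : A \cup \{x\} \in \mathcal{A}_i\}$ (resp.\ the analogous sets in the boundary case). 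Since sunflower-freeness forbids such an SDR, Hall's theorem yields a nonempty $I\subseteq[k]$ with $|\bigcup_{i\in I} R_i| \leq |I|-1$, and therefore a one-element modification $A'$ of $A$ together with $i\in I$ satisfying $A' \notin \mathcal{A}_i$. Set $\phi(A) = (A',i)$, making canonical choices.

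The main obstacle is injectivity of $\phi$. Two distinct ``in-all'' sources $A\neq A'$ could a priori map to the same target $(A'', i)$ via $A'' = A\cup\{x\} = A'\cup\{x'\}$, and an ``in-all'' source could collide with a ``not-in-all'' source at a common pair. I plan to resolve this by (i) a canonical choice of witness — lexicographically minimal Hall block, minimal extension element, and minimal family index — and (ii) a size/structure bookkeeping argument: ``not-in-all'' sources map to themselves while ``in-all'' sources map to targets of size $|A|+1$ (or the same size but with a distinguished displaced element in the boundary case), so that the codomain can be partitioned by source type and each fibre inverted uniquely. The borderline $|A| = n-k+1$ case, where one-element enlargement alone would leave the codomain empty, is precisely why the alternate ``contract then extend'' construction is needed.
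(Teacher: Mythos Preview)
Your lower bound construction and its verification are correct and coincide with the paper's. Your reduction of the upper bound to
\[
\sum_{\substack{A\subseteq [n]\\ 1\le |A|\le n-k+1}} c(A)\ \le\ (k-1)\sum_{s=1}^{n-k+1}\binom{n}{s}
\]
is also correct, and this is exactly the statement the paper proves (in fact level by level). The paper's route, however, is quite different from yours. It fixes a level $s$, picks a suitable core size $c$, takes a uniformly random ordered partition $[n]=X_1\cup X_2\cup\cdots\cup X_{k+2}$ with $|X_2|=c$ and $|X_j|=s-c$ for $j\ge 3$, and forms the bipartite incidence graph between the colour classes and the $k$ candidate members $X_2\cup X_j$. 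A perfect matching would be a multicolor sunflower, so K\"onig's theorem bounds the edge count by $k(k-1)$; taking expectations yields $\sum_i|\mathcal{A}_i\cap\binom{[n]}{s}|\le (k-1)\binom{n}{s}$ for each $1\le s\le n-k+1$, which is strictly stronger than the aggregate inequality you are after. Your Hall/SDR idea is a natural alternative, and the observation that sunflower-freeness forbids an SDR among the one-step extensions is correct.

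The gap is the injectivity of $\phi$, and your proposed fix does not close it. You assert that ``the codomain can be partitioned by source type,'' but a target pair $(B,j)$ carries no information distinguishing a not-in-all source from an in-all source one level below. Concretely: take an in-all middle set $A$ whose lexicographically minimal defective Hall set is a singleton $I=\{i_0\}$, so $R_{i_0}=\emptyset$. Then your minimal extension element is $x=\min([n]\setminus A)$ and your minimal family index is $i_0$, giving $\phi(A)=(A\cup\{x\},i_0)$. But $A\cup\{x\}\notin\mathcal{A}_{i_0}$, so $A\cup\{x\}$ is itself a not-in-all middle source (its size is at most $n-k+1$), and if $i_0$ happens to be its smallest missing index then $\phi(A\cup\{x\})=(A\cup\{x\},i_0)$ as well --- a collision. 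Nothing in your canonical choices rules this out; size bookkeeping does not help, since not-in-all sources occur at every level. The boundary case $|A|=n-k+1$ has the same problem within its own level, plus potential collisions among in-all sources reached by different swaps $(a,x)$. To rescue the scheme you would need, for every in-all source, a target missing from at least two families (so you could avoid the index reserved for not-in-all sources), and the singleton-$I$ scenario shows this is not available in general. A more global charging or fractional argument might work, but it is not what you have sketched; the paper's averaging-plus-K\"onig proof sidesteps all of this.
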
 
The problem of determining $P(n,k)$ seems more difficult than that of determining $S(n,k)$. Our bounds for general $k$ are quite far apart, but in the case $k=3$ we can refine our argument to obtain a better bound.

\begin{theorem}
$$\left(\frac{1}{8}+o(1)\right)2^{3n}\le P(n,3)\le \left(0.13075+o(1)\right)2^{3n}.$$\label{thm3}
\end{theorem}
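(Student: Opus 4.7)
I would use the explicit construction $\mathcal{A} = \mathcal{B} = \{S \subseteq [n] : n \in S\}$ and $\mathcal{C} = \{S \subseteq [n] : n \notin S\}$, each of size $2^{n-1}$. The product is $(2^{n-1})^3 = \tfrac{1}{8}\,2^{3n}$, and sunflower-freeness is immediate: for any $(A, B, C)$ from the three families, the element $n$ lies in $A \cap B$ but not in $A \cap C$, so the three pairwise intersections cannot all coincide.

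\textbf{Upper bound.} The starting point is the local observation that for any pair $(A, B) \in \mathcal{A} \times \mathcal{B}$ with $K := A \cap B$, the sets $C$ that complete a multicolor sunflower with $(A, B)$ are exactly the $2^{n - |A \cup B|} - 1$ sets of the form $K \cup T$ with $\emptyset \neq T \subseteq [n] \setminus (A \cup B)$, and all such $C$ are forbidden from $\mathcal{C}$. Dually, fixing $C \in \mathcal{C}$ and slicing $\mathcal{A}, \mathcal{B}$ by $K' := A \cap C = B \cap C$, one checks that for each $K' \subsetneq C$ the petal-families
\[
F_1(K') = \{A \setminus C : A \in \mathcal{A},\ A \cap C = K',\ A \not\subseteq C\}, \qquad F_2(K') = \{B \setminus C : B \in \mathcal{B},\ \ldots\}
\]
must be cross-intersecting in $2^{[n] \setminus C}$, for otherwise a multicolor sunflower with core $K'$ appears. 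Invoking the standard cross-intersecting bound $|F_1| + |F_2| \le 2^{n - |C|}$ and summing over $K' \subseteq C$ yields, for every $C \in \mathcal{C}$,
\[
|\mathcal{A}| + |\mathcal{B}| \le 2^n + 2^{n - |C|} + 2^{|C|+1} - 2,
\]
together with two analogous bounds by permuting the roles of the families.

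I would then push these pair-sum constraints to a product bound via a case analysis on the three family densities. Families of density $o(1)$ contribute only $o(2^{3n})$ to the product; otherwise each family contains (by Chernoff, since the non-moderate-sized subsets of $[n]$ together form an $o(2^n)$ fraction of $2^{[n]}$) a set whose size is bounded away from $0$ and $n$, which renders the correction $2^{n - |C|} + 2^{|C|+1}$ negligible compared to $2^n$ and effectively collapses the pair-sum bounds to $|\mathcal{A}| + |\mathcal{B}| \le (1 + o(1))\,2^n$ and its two permutations. Combining these three symmetric bounds with an AM-GM optimization, using Theorem~\ref{thm2} as an auxiliary when one family is near-maximal, then yields an explicit upper bound on $|\mathcal{A}||\mathcal{B}||\mathcal{C}|$.

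The main obstacle is extracting the specific sharp constant $0.13075$, as opposed to the softer bound $8/27 \approx 0.296$ that falls out of a crude AM-GM applied to Theorem~\ref{thm2} alone. Doing better will require carefully exploiting the cross-intersecting constraints \emph{jointly} across all $C \in \mathcal{C}$ (rather than one at a time) and tracking the interaction between the slices $F_i(K')$ for different choices of $C$; the explicit value $0.13075$ is presumably the extremum of a multivariate optimization problem --- in variables encoding the density profile of the three families across set sizes --- that one resolves numerically.
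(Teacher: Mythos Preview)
Your lower-bound construction is essentially the paper's (Corollary~\ref{cor1} uses the element $1$ rather than $n$ and pads each family with the $n+1$ sets of size $\ge n-1$, which is cosmetic).

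Your upper-bound route is genuinely different from the paper's, and it is stronger than you seem to realise. The cross-intersecting step is correct: for fixed $C\in\mathcal{C}$ and $K'\subsetneq C$, if $P_1\in F_1(K')$ and $P_2\in F_2(K')$ are disjoint then $(K'\cup P_1,K'\cup P_2,C)$ is a multicolor sunflower with core $K'$; hence $F_1(K')$ and $F_2(K')$ are cross-intersecting in $2^{[n]\setminus C}$, and the complement map gives $|F_1(K')|+|F_2(K')|\le 2^{n-|C|}$. Summing over $K'$ yields exactly your inequality $|\mathcal{A}|+|\mathcal{B}|\le 2^n+2^{n-|C|}+2^{|C|+1}-2$. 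Now simply finish: if every family has size at least $\epsilon\,2^n$ then each contains a set of size in $[\delta n,(1-\delta)n]$, so all three pair-sums are $\le(1+o(1))2^n$; adding and halving gives $|\mathcal{A}|+|\mathcal{B}|+|\mathcal{C}|\le(\tfrac32+o(1))2^n$, and AM--GM yields the product bound $(\tfrac18+o(1))2^{3n}$. If some family is below $\epsilon\,2^n$ the product is at most $\epsilon\,2^{3n}$, so taking $\epsilon<1/8$ finishes. This is the conjectured sharp constant, strictly better than the $0.13075$ you were targeting; the ``main obstacle'' you describe (joint analysis across all $C$, density-profile optimisation) does not exist.

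By contrast, the paper's proof reduces via the Erd\H{o}s--Szemer\'edi theorem to $\bigcap_i\mathcal{A}_i=\emptyset$, takes a uniformly random ordered partition $[n]=X_1\cup X_2\cup X_3\cup X_4$, builds the $3\times 3$ bipartite membership graph $G$, classifies the possible $G$ via a structural lemma, shows $m_2(G)+t(G)\le 6$, computes the expectation of this statistic, and feeds the resulting inequality (together with inclusion--exclusion) into a six-variable KKT optimisation whose maximum is $\approx 0.13075$. Your reflection/cross-intersecting argument is both shorter and sharper.
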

We conjecture that the lower bound is tight.

\begin{conjecture} For each fixed $k \ge 3$, 
$$P(n,k)=\left(\frac{1}{8}+o(1)\right)2^{kn}.$$
\end{conjecture}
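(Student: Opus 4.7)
The lower bound $P(n,k)\ge(1/8)2^{kn}$ would be established by an explicit construction. Fix $x\in[n]$ and set
\[
{\cal A}_1=\{A\subseteq[n]:x\notin A\},\quad {\cal A}_2={\cal A}_3=\{A\subseteq[n]:x\in A\},\quad {\cal A}_i=2^{[n]}\ \text{for}\ 4\le i\le k.
\]
For any $(A_1,\ldots,A_k)$ in the product, $x\in A_2\cap A_3$ but $x\notin A_1\cap A_2$, so these two pairwise intersections already differ and the tuple cannot be a multicolor sunflower. The product of sizes is $(2^{n-1})^3(2^n)^{k-3}=2^{kn}/8$, giving the lower half of the conjecture.

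The matching upper bound is the hard direction, and my plan would have two stages. Stage one is to strengthen Theorem~\ref{thm3} to $P(n,3)\le(1/8+o(1))2^{3n}$. I would pursue a stability argument: show that any triple of families whose product is close to the conjectured bound must be structurally close to the extremal construction above (essentially determined by a single pivot element whose membership separates ${\cal A}_1$ from ${\cal A}_2\cup{\cal A}_3$), and then feed this rigidity back into the double-counting that yields $0.13075$ in the present paper in order to push the constant to $1/8$. Stage two is to reduce the case $k\ge 4$ to $k=3$. Given $k$-sunflower-free ${\cal A}_1,\ldots,{\cal A}_k$, I would fix a tuple $(A_4,\ldots,A_k)\in{\cal A}_4\times\cdots\times{\cal A}_k$ and observe that the $k$-sunflower-free hypothesis forbids those triples $(A_1,A_2,A_3)$ which together with $(A_4,\ldots,A_k)$ would complete a $k$-sunflower of some core $C\subseteq\bigcap_{i\ge 4}A_i$. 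Averaging over $(A_4,\ldots,A_k)$ and summing over the admissible cores, one would then try to apply the sharpened $k=3$ bound and combine it with $|{\cal A}_i|\le 2^n$ for $i\ge 4$ to get $(1/8+o(1))2^{kn}$.

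The main obstacle is the upper bound at both stages. Stage one is already an open problem: the constant $0.13075$ in the present paper arises from a numerical optimisation, and replacing it by $1/8$ appears to require a genuine structural/stability theorem for the extremal product configuration. Stage two is delicate because, for a fixed $(A_4,\ldots,A_k)$, the forbidden triples are only those $3$-sunflowers whose core lies in $\bigcap_{i\ge 4}A_i$ and whose petals lie in $[n]\setminus\bigcup_{i\ge 4}A_i$; this is strictly weaker than full $3$-sunflower-freeness, so the $k=3$ bound cannot be applied naively without losing constants. One would likely need an entropy or Shearer-type counting inequality (or a product-space compression) to convert the parametric family of weaker constraints into an effective global bound with the sharp constant $1/8$. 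I expect that a complete proof of the conjecture will require ideas going meaningfully beyond those developed in this paper.
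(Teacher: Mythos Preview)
This statement is a \emph{conjecture} in the paper, not a theorem; the paper does not prove it. Hence there is no ``paper's own proof'' to compare against. What the paper does establish is the lower bound $P(n,k)\ge(1/8+o(1))2^{kn}$, in Corollary~\ref{cor1}, and for $k=3$ the upper bound $P(n,3)\le(0.13075+o(1))2^{3n}$; the matching upper bound $1/8$ is explicitly left open.

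Your lower-bound construction is essentially the same as the paper's (Corollary~\ref{cor1}), just stripped of the ``or $|S|\ge n-1$'' clause, which contributes only $o(2^n)$ anyway. Your version is correct and in fact gives the exact value $2^{kn}/8$ rather than $(1/8+o(1))2^{kn}$, so it is a slight cosmetic improvement. Note that your indexing differs from the paper's (you put the ``$x\notin A$'' family in ${\cal A}_1$ and the two ``$x\in A$'' families in ${\cal A}_2,{\cal A}_3$, whereas the paper does the reverse), but this is immaterial.

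Your discussion of the upper bound is an honest appraisal of an open problem rather than a proof, and you correctly flag it as such. The two-stage plan you sketch (sharpen the $k=3$ constant via stability, then reduce $k\ge4$ to $k=3$) is plausible as a strategy, but as you note, Stage~1 is itself the open problem, and the reduction in Stage~2 does not go through directly because fixing $(A_4,\ldots,A_k)$ only forbids $3$-sunflowers of a restricted shape. So there is no gap to name here in the sense of a flawed proof: you have correctly proved the half of the conjecture that is a theorem, and correctly identified the other half as open.
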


In the next two subsections we give the proofs of Theorems \ref{thm2} and \ref{thm3}.

\subsection{Sums}
In order to prove Theorem~\ref{thm2}, we first deal with  $s$-uniform families and prove a stronger result. Given a multicolor sunflower ${\cal S}$, define its \emph{core size} to be $c({\cal S})=|C|$.

\begin{lemma}
Given integers $s\ge 1$ and $c$ with $0\le c\le s-1$, let $n$ be an integer such that $n\ge c+k(s-c)$. For $ i=1,\ldots,k$, let $\mathcal{A}_i\subset \binom{[n]}{s}$ such that $\{\mathcal{A}_i\}_{i=1}^k$ contains no multicolor sunflower with $k$ petals and core size $c$. Then
$$\sum_{i=1}^k|\mathcal{A}_i|\le (k-1)\binom{n}{s}.$$
Furthermore, this bound is tight.
\label{thm1}
\end{lemma}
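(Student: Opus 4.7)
The plan is to reduce Lemma~\ref{thm1} to a rainbow matching estimate applied at each potential core. For each $c$-set $C \in \binom{[n]}{c}$ and each $i \in [k]$ define the link
$$\mathcal{B}_i(C) = \{A \setminus C : A \in \mathcal{A}_i,\ C \subset A\} \subset \binom{[n] \setminus C}{s-c}.$$
A multicolor sunflower with $k$ petals and core $C$ (hence core size exactly $c$) is precisely a collection of pairwise disjoint sets $B_i \in \mathcal{B}_i(C)$, one from each family; since $s > c$ these petals are automatically nonempty. The hypothesis therefore translates to: for every $C \in \binom{[n]}{c}$, the families $\mathcal{B}_1(C), \ldots, \mathcal{B}_k(C)$ admit no \emph{rainbow matching} of size $k$, meaning no choice of pairwise disjoint sets, one from each family.

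The key technical step I would prove is the following rainbow matching lemma: if $\mathcal{F}_1, \ldots, \mathcal{F}_k \subset \binom{[m]}{r}$ with $m \geq kr$ and there is no rainbow matching of size $k$, then $\sum_{i=1}^{k} |\mathcal{F}_i| \leq (k-1) \binom{m}{r}$, with the extremal configuration being $k-1$ copies of $\binom{[m]}{r}$ together with one empty family. I would proceed by induction on $k$, the base case $k = 2$ being the classical sum bound for cross-intersecting $r$-uniform families when $m \geq 2r$. For the inductive step, the natural strategy is to take $\mathcal{F}_k$ to be a smallest family, observe that for each $F \in \mathcal{F}_k$ the sub-families $\{G \in \mathcal{F}_i : G \cap F = \emptyset\}$ for $i < k$ lie in $\binom{[m] \setminus F}{r}$ and admit no rainbow matching of size $k-1$ on a ground set of size $m - r \geq (k-1)r$, then invoke the inductive bound to get $\sum_{i<k} |\{G \in \mathcal{F}_i : G \cap F = \emptyset\}| \leq (k-2)\binom{m-r}{r}$. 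A careful double count over $F \in \mathcal{F}_k$ should then combine with this local estimate to yield the global bound; packaging this cleanly is the main obstacle, and a shifting argument (left-shifts preserve both family sizes and the no-rainbow-matching property) may offer a more direct alternative.

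Granting the rainbow matching lemma, Lemma~\ref{thm1} follows by a standard double count. For each $i$, counting pairs $(C,A)$ with $|C|=c$, $C \subset A \in \mathcal{A}_i$ yields $\sum_C |\mathcal{B}_i(C)| = \binom{s}{c}|\mathcal{A}_i|$. Since $n - c \geq k(s-c)$, the rainbow matching lemma applies at each $C$ with $m = n - c$ and $r = s - c$, giving $\sum_i |\mathcal{B}_i(C)| \leq (k-1)\binom{n-c}{s-c}$. Summing over $C \in \binom{[n]}{c}$ and using the identity $\binom{n}{c}\binom{n-c}{s-c} = \binom{s}{c}\binom{n}{s}$,
$$\binom{s}{c}\sum_{i=1}^{k}|\mathcal{A}_i| \;=\; \sum_{C}\sum_{i=1}^{k}|\mathcal{B}_i(C)| \;\leq\; (k-1)\binom{n}{c}\binom{n-c}{s-c} \;=\; (k-1)\binom{s}{c}\binom{n}{s},$$
which gives $\sum_i |\mathcal{A}_i| \leq (k-1)\binom{n}{s}$. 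Tightness is witnessed by $\mathcal{A}_1 = \cdots = \mathcal{A}_{k-1} = \binom{[n]}{s}$ and $\mathcal{A}_k = \emptyset$, which is trivially sunflower-free and achieves the bound.
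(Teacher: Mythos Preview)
Your link-and-double-count reduction is correct: the identity $\sum_C |\mathcal{B}_i(C)| = \binom{s}{c}|\mathcal{A}_i|$ together with $\binom{n}{c}\binom{n-c}{s-c} = \binom{s}{c}\binom{n}{s}$ does reduce the lemma to its own $c=0$ special case, which is exactly your rainbow matching lemma. The gap is that you have not actually proved that case. Your inductive step does not close as written: fixing a single $F\in\mathcal{F}_k$ and combining the inductive bound on the $\mathcal{G}_i^F$ with the crude estimate $|\mathcal{F}_i\setminus\mathcal{G}_i^F|\le\binom{m}{r}-\binom{m-r}{r}$ yields only
\[
\sum_{i=1}^{k}|\mathcal{F}_i|\;\le\;(k-1)\binom{m}{r}-\binom{m-r}{r}+|\mathcal{F}_k|,
\]
which falls short exactly when the smallest family exceeds $\binom{m-r}{r}$. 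Averaging over $F\in\mathcal{F}_k$ hits the same wall, since the counts $|\{F\in\mathcal{F}_k:F\cap G=\emptyset\}|$ admit no useful uniform lower bound, and shifting preserves the hypothesis but does not by itself supply the missing inequality. So the ``main obstacle'' you flag is a genuine one, and since the rainbow matching lemma is literally the $c=0$ instance of the statement you are trying to prove, the reduction has relocated rather than removed the difficulty.

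The paper's argument is different and self-contained. It samples a uniformly random ordered partition $[n]=X_1\cup\cdots\cup X_{k+2}$ with $|X_2|=c$ and $|X_3|=\cdots=|X_{k+2}|=s-c$, builds the bipartite graph between $\{\mathcal{A}_i\}_{i=1}^k$ and $\{X_2\cup X_j\}_{j=3}^{k+2}$ with an edge whenever $X_2\cup X_j\in\mathcal{A}_i$, and observes that a perfect matching would give a multicolor sunflower with core $X_2$; hence by K\"onig's theorem the edge count is deterministically at most $(k-1)k$. Computing $\mathbb{P}(X_2\cup X_j=A)=1/\binom{n}{s}$ and taking expectations gives $k\sum_i|\mathcal{A}_i|/\binom{n}{s}\le(k-1)k$. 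This handles every $c$ at once, so your reduction is unnecessary---and, as it happens, this same K\"onig-plus-averaging argument specialised to $c=0$ is precisely the clean proof of the rainbow matching lemma that your plan is missing.
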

\begin{proof}
Randomly take an ordered partition of $[n]$ into $k+2$ parts $X_1, X_2,\ldots, X_{k+2}$ such that $|X_1|=n-(c+k(s-c)), |X_2|=c$, and $|X_i|=s-c$ for $i=3,\ldots, k+2$, with uniform probability for each partition. For each partition, construct the bipartite graph 
$$G=(\{\mathcal{A}_i:i=1,\ldots,k\}\cup\{X_2\cup X_j:j\in [3,k+2]\},E)$$ where a pair $\{\mathcal{A}_i,X_2\cup X_j\}\in E$ if and only if $X_2\cup X_j\in\mathcal{A}_i$. If there exists a perfect matching in $G$, then we will get a multicolor sunflower with $k$ petals and core size $c$, since $X_2$ will be the core. This shows that $G$ has matching number at most $k-1$. Then K\"{o}nig's theorem implies that the random variable $|E(G)|$ satisfies
\begin{equation} \label{k} |E(G)|\le (k-1)k.\end{equation}
Another way to count the edges of $G$ is through the following expression:
\begin{equation}
\begin{split}
|E(G)|=\sum_{i=1}^k\sum_{j=3}^{k+2}\chi_{\{X_2\cup X_j\in \mathcal{A}_i\}},
\end{split}\label{ineq1}
\end{equation}
where $\chi_A$ is the characteristic function of the event $A$. Taking expectations and using (\ref{k}) we obtain
$$\mathbb{E}\left(\sum_{i=1}^k\sum_{j=3}^{k+2}\chi_{\{X_2\cup X_j\in \mathcal{A}_i\}}\right) \le (k-1)k.$$
 By linearity of expectation,
\begin{equation*}
\begin{split}
&\mathbb{E}\left(\sum_{i=1}^k\sum_{j=3}^{k+2}\chi_{\{X_2\cup X_j\in \mathcal{A}_i\}}\right)=\sum_{i=1}^k\sum_{j=3}^{k+2}\mathbb{P}\left(X_2\cup X_j\in \mathcal{A}_i\right)=\sum_{i=1}^k\sum_{j=3}^{k+2}\sum_{A\in\mathcal{A}_i}\mathbb{P}\left(A=X_2\cup X_j\right).
\end{split}
\end{equation*}
The probability that a set $A$ is partitioned as $X_2\cup X_j$ is the same as the probability that $A$ is partitioned into two ordered parts of sizes $c$ and $s-c$, and $[n]\setminus A$ has an ordered partitioned into $k$ parts with one of the parts of size $n-(c+k(s-c))$ and $k-1$ of them of size $s-c$. Hence for any $A\in {\cal A}_i$,
\begin{equation*}
\begin{split}
\mathbb{P}{\left(A=X_2\cup X_j\right)}
&=\frac{\binom{|A|}{c}\binom{n-|A|}{n-(c+k(s-c))}\prod_{i=1}^{k-1}\binom{(k-i)(s-c)}{s-c}}{\binom{n}{c+k(s-c)}\binom{c+k(s-c)}{c}\prod_{i=0}^{k-1}\binom{(k-i)(s-c)}{s-c}}\\
&=\frac{\binom{s}{c}\binom{n-s}{n-(c+k(s-c))}\prod_{i=1}^{k-1}\binom{(k-i)(s-c)}{s-c}}{\binom{n}{c+k(s-c)}\binom{c+k(s-c)}{c}\binom{k(s-c)}{s-c}\prod_{i=1}^{k-1}\binom{(k-i)(s-c)}{s-c}}\\
&=\frac{1}{\binom{n}{s}}.
\end{split}
\end{equation*}
So we have 
$$\mathbb{E}\left(\sum_{i=1}^k\sum_{j=3}^{k+2}\chi_{\{X_2\cup X_j\in \mathcal{A}_i\}}\right)=\sum_{i=1}^k\sum_{j=3}^{k+2}\sum_{A\in\mathcal{A}_i}\frac{1}{\binom{n}{s}}=\sum_{i=1}^k|\mathcal{A}_i|\frac{k}{\binom{n}{s}}.$$
Hence by (\ref{ineq1}),
$$\sum_{i=1}^k|\mathcal{A}_i|\le (k-1)\binom{n}{s}.$$

The bound shown above is tight, since we can take ${\cal A}_1={\cal A}_2=\ldots={\cal A}_{k-1}=\binom{[n]}{s}$, and ${\cal A}_k=\emptyset$.
\end{proof}

 Now we use this lemma to prove Theorem~\ref{thm2}. 
\bigskip

{\bf Proof of Theorem~\ref{thm2}.}  Recall that $n \ge k \ge 3$ and we are to show that
$$S(n,k)= (k-1)2^n+1+\sum_{s=n-k+2}^n\binom{n}{s}.$$
To see the upper bound, given families $\{{\cal A}_i\}_{i=1}^k\in {\cal F}(n,k)$, we define ${\cal A}_{i,s}={\cal A}_i\cap \binom{[n]}{s}$ for each $i\in [k]$ and integer $s\in [0,n]$. This gives a partition of each family ${\cal A}_i$ into $n+1$ subfamilies. Since families $\{{\cal A}_i\}_{i=1}^k$ contain no multicolor sunflowers with $k$ petals, neither do $\{{\cal A}_{i,s}\}_{i=1}^k$ for all $s\in[0,n]$. Now, for each $s=1, 2,\ldots, n-k+1$ let 
$$c=\max\left\{0,\, \frac{n-ks}{1-k}\right\}.$$ Then $0 \le c \le s-1$, and $n\ge c+k(s-c)$. Therefore, by Lemma~\ref{thm1}, for $1 \le s\le n-k+1$,
$$\sum_{i=1}^k|{\cal A}_{i,s}|\le(k-1)\binom{n}{s}.$$
For $s>n-k+1$, notice that a trivial bound for this sum is $k\binom{n}{s}$. So we get,
\begin{equation*}
\begin{split}
\sum_{i=1}^k|{\cal A}_{i}|&=\sum_{i=1}^{k}\sum_{s=0}^{n}|{\cal A}_{i,s}|\\
&=\sum_{s=0}^{n}\sum_{i=1}^k|{\cal A}_{i,s}|\\
&=\sum_{i=1}^k|{\cal A}_{i,0}|+\sum_{s=1}^{n-k+1}\sum_{i=1}^k|{\cal A}_{i,s}|+\sum_{s=n-k+2}^{n}\sum_{i=1}^k|{\cal A}_{i,s}|\\
&\leq k\binom{n}{0}+ \sum_{s=1}^{n-k+1}(k-1)\binom{n}{s}+ \sum_{s=n-k+2}^nk\binom{n}{s}\\
&\leq \sum_{s=0}^n(k-1)\binom{n}{s}+\binom{n}{0}+\sum_{s=n-k+2}^n\binom{n}{s}\\
&=(k-1)2^n+1+\sum_{s=n-k+2}^n\binom{n}{s}.
\end{split}
\end{equation*}
The lower bound is obtained by the following example: ${\cal A}_i=2^{[n]}$ for $i=1\ldots,k-1$ and ${\cal A}_k=\{\emptyset\}\cup\{S\subset[n]: |S|\ge n-k+2\}$. To see that $\{{\cal A}_i\}_{i=1}^{k}$ contains no multicolor sunflower, notice that any multicolor sunflower uses a set from ${\cal A}_k$. The empty set does not lie in any sunflowers. So if a set of size at least $n-k+2$ appeared in a sunflower with $k$ petals, it requires at least $k-1$ other points to form such a sunflower, but then the total number of points in this sunflower is at least $n+1$, a contradiction.
\qed

\subsection{Products}
From the bound on the sum of the families that do not contain a multicolor sunflower, we deduce the following bound on the product by using AM-GM inequality.
\begin{corollary} \label{cor1}Fix $k\ge 3$. As $n\rightarrow\infty$,
$$\left(\frac{1}{8}+o(1)\right)2^{kn}\le P(n,k)\le\left(\left(\frac{k-1}{k}\right)^k+o(1)\right)2^{kn}.$$
\end{corollary}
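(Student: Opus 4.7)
The plan is to get both bounds essentially for free from Theorem~\ref{thm2}: the upper bound by AM-GM applied to the sum estimate, and the lower bound by an explicit construction that places three ``half-power-sets'' on the first three families and the full power set on the remaining $k-3$.

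For the upper bound, I would first rewrite Theorem~\ref{thm2} as $\sum_{i=1}^k|\mathcal{A}_i|\le (k-1)2^n+O(n^{k-2})$, since the tail $\sum_{s=n-k+2}^n\binom{n}{s}$ is a polynomial in $n$ of degree $k-2$ once $k$ is fixed. Dividing by $k$ and raising to the $k$-th power gives, by AM-GM,
$$\prod_{i=1}^k|\mathcal{A}_i|\le\left(\tfrac{1}{k}\sum_{i=1}^k|\mathcal{A}_i|\right)^k\le\left(\tfrac{k-1}{k}+o(1)\right)^k 2^{kn}=\left(\left(\tfrac{k-1}{k}\right)^k+o(1)\right)2^{kn},$$
where the last equality uses that $k$ is fixed, so $(a+o(1))^k=a^k+o(1)$.

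For the lower bound, I would take
$$\mathcal{A}_1=\mathcal{A}_2=\{S\subset[n]:n\in S\},\quad \mathcal{A}_3=\{S\subset[n]:n\notin S\},\quad \mathcal{A}_i=2^{[n]}\text{ for }4\le i\le k.$$
The product of sizes is $(2^{n-1})^3\cdot 2^{(k-3)n}=\tfrac{1}{8}2^{kn}$, giving the lower bound on the nose (no $o(1)$ slack needed). The families are sunflower-free: in any would-be multicolor sunflower with core $C$, the fact that $n\in A_1\cap A_2$ would force $n\in C$, while $n\notin A_3$ would force $n\notin A_1\cap A_3=C$, a contradiction.

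No real obstacle arises in either direction; both bounds are one-line consequences of Theorem~\ref{thm2} together with a short explicit example. The genuinely difficult question is closing the gap between $((k-1)/k)^k$ and $\tfrac{1}{8}$: AM-GM is wasteful here, because the sum is maximized by a highly unbalanced configuration, not by the balanced configuration that would also maximize the product. Refining the upper bound is precisely what Theorem~\ref{thm3} addresses in the case $k=3$.
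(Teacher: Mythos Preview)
Your proof is correct and follows essentially the same approach as the paper: AM--GM on the sum bound from Theorem~\ref{thm2} for the upper bound, and an explicit ``half power set'' construction on the first three families for the lower bound. The only difference is that the paper's construction pads $\mathcal{A}_1,\mathcal{A}_2,\mathcal{A}_3$ with the sets of size at least $n-1$ (arguing separately that such large sets cannot sit in a $3$-petal sunflower), whereas your stripped-down version already gives exactly $\tfrac{1}{8}\,2^{kn}$ and a cleaner contradiction via the element~$n$; asymptotically the two constructions are equivalent.
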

\begin{proof}
The upper bound follows from Theorem~\ref{thm2} and the AM-GM inequality,
$$\prod_{i=1}^k|\mathcal{A}_i|\le\left(\frac{\sum_{i=1}^k|\mathcal{A}_i|}{k}\right)^k\le\left((1+o(1))\frac{(k-1)2^n}{k}\right)^k=(1+o(1))\left(\frac{k-1}{k}\right)^k2^{kn}.$$

For the lower bound, we take 
$${\cal A}_{1}={\cal A}_2=\{S\subset[n]:1\in S\mbox{ or }|S|\ge n-1\},$$
$${\cal A}_3=\{S\subset[n]:1\notin S\mbox{ or }|S|\ge n-1\},$$
and ${\cal A}_4={\cal A}_5=\ldots={\cal A}_{k}=2^{[n]}$. A multicolor sunflower with $k$ petals must use three sets from ${\cal A}_{1},{\cal A}_2$, and ${\cal A}_{3}$, call them $A_1, A_2, A_3$ respectively. These three sets form a multicolor sunflower with three petals. If any of these sets is of size at least $n-1$, then it will be impossible to form a 3-petal sunflower with the other two sets. So by their definitions, we have $1\in A_1\cap A_2$, but $1\notin A_3$, which implies $A_1\cap A_2\neq A_1\cap A_3$, a contradiction. So the families $\{{\cal A}_i\}_{i=1}^k$ contain no multicolor sunflowers with $k$ petals. The sizes of these families are $|{\cal A}_{1}|=|{\cal A}_{2}|=|{\cal A}_{3}|=(\frac{1}{2}+o(1))2^n$, and $|{\cal A}_{i}|=2^n$ for $i\ge 4$. Thus,
$$\prod_{i=1}^k|\mathcal{A}_i|=\left(\frac{1}{8}+o(1)\right)2^{kn}.$$
\end{proof}

For any positive integer $k$ we have $(\frac{k-1}{k})^k<1/e$, so Corollary~\ref{cor1} implies the upper bound $(1/e+o(1))2^{kn}$ for all $k\ge 3$.  For $k=3$, we will improve the factor in the upper bound from $(2/3)^3=0.29629\cdots$ to approximately $0.131$, which is quite close to our conjectured value of $0.125$.  We need the following lemma. 

\begin{lemma}
Let $G=(V_1\cup V_2,E)$ be a bipartite graph with $|V_1|=|V_2|=3$ and $d(v)\le 2$ for all $v\in V_2$. If the maximum size of a matching in $G$ is at most two, then $G$ is a subgraph of one of the following three graphs.

$\bullet$ $G_1$: a copy of $K_{2,3}$ with the part of size two in $V_1$ and the part of size three in $V_2$

$\bullet$  $G_2$: two vertex disjoint copies of the path with two edges

$\bullet$  $G_3$: a path with four edges whose endpoints are in $V_1$
 
\begin{center}
\begin{tikzpicture}
  \node at (0,0) [vertex] {};
  \node at (0,1) [vertex] {};
  \node at (0,2) [vertex] {};
  \node at (1.5,0) [vertex] {};
  \node at (1.5,1) [vertex] {};
  \node at (1.5,2) [vertex] {};
  \node at (0,2.5)  {$V_1$};
  \node at (1.5,2.5) {$V_2$};
  \node at (0.75,-0.5) {$G_1$};
  \node at (4,0) [vertex] {};
  \node at (4,1) [vertex] {};
  \node at (4,2) [vertex] {};
  \node at (5.5,0) [vertex] {};
  \node at (5.5,1) [vertex] {};
  \node at (5.5,2) [vertex] {};
  \node at (4,2.5)  {$V_1$};
  \node at (5.5,2.5) {$V_2$};
  \node at (4.75,-0.5) {$G_2$};
  \node at (8,0) [vertex] {};
  \node at (8,1) [vertex] {};
  \node at (8,2) [vertex] {};
  \node at (9.5,0) [vertex] {};
  \node at (9.5,1) [vertex] {};
  \node at (9.5,2) [vertex] {};
  \node at (8,2.5)  {$V_1$};
  \node at (9.5,2.5) {$V_2$};
  \node at (8.75,-0.5) {$G_3$};
    \draw[thick]  (0,2) to (1.5,2);
    \draw[thick]  (0,2) to (1.5,1);
    \draw[thick]  (0,2) to (1.5,0);
    \draw[thick]  (0,1) to (1.5,2);
    \draw[thick]  (0,1) to (1.5,1);
    \draw[thick]  (0,1) to (1.5,0);
    \draw[thick]  (4,2) to (5.5,2);
    \draw[thick]  (4,2) to (5.5,1);
    \draw[thick]  (4,1) to (5.5,0);
    \draw[thick]  (4,0) to (5.5,0);
    \draw[thick]  (8,2) to (9.5,2);
    \draw[thick]  (8,1) to (9.5,2);
    \draw[thick]  (8,1) to (9.5,1);
    \draw[thick]  (8,0) to (9.5,1);
  \end{tikzpicture}
\end{center}
\label{strcture}
\end{lemma}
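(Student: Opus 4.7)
The plan is to apply K\"onig's theorem to the bipartite graph $G$, producing a vertex cover $C$ of size equal to the maximum matching, hence $|C| \le 2$. I would then split into cases according to how $C$ distributes across $V_1$ and $V_2$, and in each case exhibit an embedding of $G$ into one of $G_1, G_2, G_3$.

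When $C \subseteq V_1$, every edge is incident to at most two vertices of $V_1$, so $G$ is a subgraph of $K_{2,3}$ with the part of size two in $V_1$; i.e.\ $G \subseteq G_1$. The degree condition on $V_2$ plays no role here. When $C \subseteq V_2$, write $C = \{x,y\}$; all edges are incident to $x$ or $y$, each of degree at most $2$. I would then case on $|N(x) \cap N(y)|$. If it equals $2$, then $N(x) = N(y)$ has exactly two elements and the edges form a $K_{2,2} \subseteq G_1$. If it equals $1$, writing $N(x) = \{v,a\}$ and $N(y) = \{v,b\}$, the edge set is contained in the length-four path $a - x - v - y - b$ with endpoints in $V_1$, which is $G_3$. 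If the neighborhoods are disjoint, then $|N(x)| + |N(y)| \le |V_1| = 3$ forces at least one of $x, y$ to have degree $\le 1$, and the resulting union of two vertex-disjoint stars fits inside the two $P_3$'s of $G_2$ (mapping the degree-$2$ vertex to the $V_2$-centered $P_3$ and the degree-$\le 1$ vertex into the $V_1$-centered $P_3$).

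The last case is $C = \{u, v\}$ with $u \in V_1$ and $v \in V_2$; here every edge is incident to $u$ or $v$, so no edge joins $V_1 \setminus \{u\} = \{a,b\}$ to $V_2 \setminus \{v\} = \{c,d\}$. If $uv$ is an edge, the degree bound gives $v$ at most one further neighbor, say $a$, and all edges lie in $\{uv, uc, ud, av\}$; these sit inside a $K_{2,3}$ with $\{u,a\}$ as the 2-part, hence inside $G_1$. If $uv$ is not an edge, the edge set splits as a star at $u$ with leaves in $\{c,d\}$ together with a star at $v$ with leaves in $\{a,b\}$, and together these form at most two disjoint $P_3$'s, giving $G \subseteq G_2$. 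The only subtle point in the whole argument is the disjoint-neighborhoods subcase of $C \subseteq V_2$: it is precisely the combination of $|V_1| = 3$ and the $V_2$ degree bound that rules out the one configuration (two $V_2$-centered $P_3$'s) which $G_2$ does not contain; otherwise the case analysis is routine bookkeeping.
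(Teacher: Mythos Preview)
Your proof is correct and follows essentially the same approach as the paper: apply K\"onig's theorem to obtain a vertex cover of size at most two, then case on how the cover distributes between $V_1$ and $V_2$. The subcase organization differs only cosmetically---you split the mixed case on whether $uv\in E$ while the paper splits on $d(u)$, and you spell out the $C\subseteq V_2$ analysis that the paper calls ``obvious''; just make explicit that the $|C|\le 1$ situations (in particular a single cover vertex in $V_2$) are absorbed into your cases, which is immediate.
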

\begin{proof}
By K\"{o}nig's theorem, the minimum vertex cover of $G$ has size at most two. Suppose $G$ has a vertex that covers all the edges.  Then $G$ is a subgraph of either a $K_{1,3}$ whose degree three vertex is in $V_1$ or a $K_{1,2}$ whose degree two vertex is in $V_2$. Both of these are subgraphs of $G_1$.

Now we may assume that a minimum vertex cover $S$ of $G$ has size two. If $S \subset V_1$, then $G$ is a subgraph of $G_1$. Next we assume that $S=\{u,v\}$  with $u\in V_1, v\in V_2$. If $d(u)=3$, then $d(v)\le 2$ implies that $G\subset G_1$. If $d(u)=2$ and $uv\in E(G)$, then $G\subset  G_1$. If $d(u)=2$ and $uv\notin E(G)$, then $G\subset G_2$. If $d(u)=1$, then clearly $G\subset G_3$. The remaining case is that  $S \subset V_2$, and it is obvious that $G\subset G_1$ or $G\subset G_3$.
\end{proof}

  We now have the necessary ingredients to prove Theorem~\ref{thm3}.

{\bf Proof of Theorem~\ref{thm3}.}  Recall that $n \ge k \ge 3$ and we are to show that
$$\left(\frac{1}{8}+o(1)\right)2^{3n}\le P(n,3)\le \left(0.13075+o(1)\right)2^{3n}.$$
The lower bound follows from Corollary~\ref{cor1}. The upper bound is proved using a similar idea as the proof of Lemma~\ref{thm1}, although the graph statistic considered is more complicated. First notice that given families $\mathcal{A}_i\subset 2^{[n]}, i=1, 2, 3$ that are in ${\cal F}(n,3)$, such that $\prod_{i=1}^3|{\cal A}_i|$ is maximized, we may assume that the common part of the three families $\bigcap_{i=1}^3{\cal A}_i=\emptyset$. To see this, let ${\cal A}_c=\bigcap_{i=1}^3{\cal A}_i$. By Theorem~\ref{thmES}, $|{\cal A}_c|\le 2^{n-c\sqrt{n}}=o(2^n)$, otherwise a $3$-petal multicolor sunflower exists. Notice also that from the lower bound, $|{\cal A}_i|=\Theta(2^n)$ for all $i\in[3]$, we have
$$\prod_{i=1}^3|{\cal A}_i|=\prod_{i=1}^3(|{\cal A}_i\setminus{\cal A}_c|+|{\cal A}_c|)=\prod_{i=1}^3(|{\cal A}_i\setminus{\cal A}_c|+o(2^n))=\prod_{i=1}^3|{\cal A}_i\setminus{\cal A}_c|+o(2^{3n}).$$
Hence, it suffices to show a bound on $\prod_{i=1}^3|{\cal A}_i\setminus{\cal A}_c|$.

We uniformly take an ordered partition of $[n]=X_1\cup X_2\cup X_3\cup X_4$ at random such that the parts $X_2, X_3$ and $X_4$ are nonempty. So there are 
$$p(n)=4^n-3\cdot3^n+3\cdot2^n-1=4^n+O(3^n)$$ such partitions, each is chosen with probability $1/p(n)$. Again, we construct a bipartite graph $G=(V_1\cup V_2,E)$, where $V_1=\{\mathcal{A}_i:i=1,2,3\}$ and $V_2=\{X_1\cup X_j: j=2,3,4\}$, and a pair $\{\mathcal{A}_i,X_1\cup X_j\}\in E$ if and only if $X_1\cup X_j\in \mathcal{A}_i$. A perfect matching in $G$ gives rise to a multicolor sunflower with three petals. Hence the maximum size of a matching in  $G$ is at most two. Moreover,  since ${\cal A}_c=\emptyset$, the degrees of vertices in $V_2$ are at most two. We may now apply Lemma~\ref{strcture} to deduce that $G$ is a subgraph of $G_i$ for some $i=1,2,3$. 

 Let $m_2(G)$ be the number of matchings in $G$ of size two and $t(G)$ be the number of five vertex subgraphs of $G$ comprising a degree two vertex $v \in V_2$, the two edges incident to it, and an additional isolated edge. Observe that $m_2(G_1)+t(G_1)=6+0=6,m_2(G_2)+t(G_2)=4+2=6,$ and $m_2(G_3)+t(G_3)=3+2=5$. Since $G\subset G_i$ for some $i=1,2,3$, we have
 \begin{equation} \label{mf} m_2(G)+t(G) \le \max_{i\in[3]}\left(m_2(G_i)+t(G_i)\right)= 6.
 \end{equation}
 Let
 $$P=\sum_{\substack{(\mathcal{B}_1,\mathcal{B}_2)\in V_1^2\\{\cal B}_1\neq{\cal B}_2}}\sum_{\substack{(Y_1,Y_2)\in V_2^2\\Y_1\neq Y_2}}\frac{1}{2}\chi_{\{Y_i\in \mathcal{B}_{i}: i=1,2\}}$$
 and 
$$Q=\sum_{\substack{(\mathcal{B}_1,\mathcal{B}_2,\mathcal{B}_3)\in V_1^3\\{\cal B}_i\neq{\cal B}_j, i\neq j}}\sum_{\substack{(Y_1,Y_2)\in V_2^2\\Y_1\neq Y_2}}\frac{1}{2}\chi_{\{Y_1\in \mathcal{B}_{1},Y_2\in \mathcal{B}_{2},Y_2\in \mathcal{B}_{3}\}}.$$
Then (\ref{mf}) implies that  
   \begin{equation}
\begin{split}
&\mathbb{E}(P+Q)\le 6.
\end{split}\label{ineq2}
\end{equation}
By linearity of expectation, to calculate $\mathbb{E}(P+Q)$, we just need to calculate the expectations of $\chi_{\{Y_i\in \mathcal{B}_{i}: i=1,2\}}$ and $\chi_{\{Y_1\in \mathcal{B}_{1},Y_2\in \mathcal{B}_{2},Y_2\in \mathcal{B}_{3}\}}$.

Call a pair of sets $(B_i\in \mathcal{B}_{i}: i=1,2)$ \emph{good} if $B_1\setminus B_2\neq\emptyset$, $B_2\setminus B_1\neq\emptyset$ and $B_1\cup B_2\neq[n].$ Conversely, a pair is called \emph{bad}, if either $B_1\subset B_2$, or $B_2\subset B_1$, or $B_1\cup B_2=[n]$. We see that bad pairs induce partitions on $[n]$ into at most three parts, which shows that the number of bad pairs is $O(3^n)$. Now, for each good pair $(B_1, B_2)$, there exists a unique partition $$[n]= (B_1\cap B_2) \cup 
(B_1\setminus B_2)\cup (B_2\setminus B_1) \cup ([n]\setminus(B_1 \cup B_2))$$ such that $Y_1=B_1, Y_2=B_2$. Therefore,
\begin{equation*}
\begin{split}
\mathbb{E}\left(\chi_{\{Y_i\in \mathcal{B}_{i}: i=1,2\}}\right)&=\mathbb{P}\left(Y_i\in \mathcal{B}_{i}: i=1,2\right) \\
&=\sum_{B_1\in\mathcal{B}_{1}}\sum_{B_2\in\mathcal{B}_{2}}\mathbb{P}\left(Y_1=B_1,Y_2=B_2\right)\\
&=\sum_{(B_i\in {\cal B}_i: i=1,2)\text{ is good}}\frac{1}{p(n)}\\
&=\frac{\#\{\text{good pairs } (B_i\in {\cal B}_i: i=1,2)\}}{4^n+O(3^n)}\\
&=\frac{|{\cal B}_1||{\cal B}_2|+O(3^n)}{4^n+O(3^n)}\\
&=(1+o(1))\frac{|{\cal B}_1||{\cal B}_2|}{4^n}. 
\end{split}
\end{equation*}

Similarly,
\begin{equation*}
\begin{split}
\mathbb{E}\left(\chi_{\{Y_1\in \mathcal{B}_{1},Y_2\in \mathcal{B}_{2},Y_2\in \mathcal{B}_{3}\}}\right)&=\mathbb{P}\left(Y_1\in \mathcal{B}_{1},Y_2\in \mathcal{B}_{2},Y_2\in \mathcal{B}_{3}\right)\\
&=\sum_{B_1\in\mathcal{B}_{1}}\sum_{B_2\in\mathcal{B}_{2}\cap\mathcal{B}_3}\mathbb{P}\left(Y_1=B_1,Y_2=B_2\right)\\
&=(1+o(1))\frac{|\mathcal{B}_{1}||\mathcal{B}_{2}\cap\mathcal{B}_3|}{4^n}.
\end{split}
\end{equation*}
Consequently, $\mathbb{E}(P+Q)$ is equal to
\begin{equation*}
\begin{split}
&\sum_{\substack{(\mathcal{B}_1,\mathcal{B}_2)\in V_1^2\\{\cal B}_1\neq{\cal B}_2}}\sum_{\substack{(Y_1,Y_2)\in V_2^2\\Y_1\neq Y_2}}\frac{(1+o(1))}{2}\frac{|\mathcal{B}_{1}||\mathcal{B}_{2}|}{4^n}+\sum_{\substack{(\mathcal{B}_1,\mathcal{B}_2,\mathcal{B}_3)\in V_1^3\\{\cal B}_i\neq{\cal B}_j, i\neq j}}\sum_{\substack{(Y_1,Y_2)\in V_2^2\\Y_1\neq Y_2}}\frac{(1+o(1))}{2}\frac{|\mathcal{B}_{1}||\mathcal{B}_{2}\cap\mathcal{B}_3|}{4^n}\\
&=\sum_{\substack{(\mathcal{B}_1,\mathcal{B}_2)\in V_1^2\\{\cal B}_1\neq{\cal B}_2}}\frac{(6+o(1))}{2}\frac{|\mathcal{B}_{1}||\mathcal{B}_{2}|}{4^n}+\sum_{\substack{(\mathcal{B}_1,\mathcal{B}_2,\mathcal{B}_3)\in V_1^3\\{\cal B}_i\neq{\cal B}_j, i\neq j}}\frac{(6+o(1))}{2}\frac{|\mathcal{B}_{1}||\mathcal{B}_{2}\cap\mathcal{B}_3|}{4^n}\\
&=\frac{6+o(1)}{4^n}(|\mathcal{A}_{1}||\mathcal{A}_{2}|+|\mathcal{A}_{2}||\mathcal{A}_{3}|+|\mathcal{A}_{3}||\mathcal{A}_{1}|+|\mathcal{A}_{1}||\mathcal{A}_{2}\cap\mathcal{A}_3|+|\mathcal{A}_{2}||\mathcal{A}_{3}\cap\mathcal{A}_1|+|\mathcal{A}_{3}||\mathcal{A}_{1}\cap\mathcal{A}_2|).
\end{split}
\end{equation*}
Let 
$$a=|\mathcal{A}_{1}|, \,b=|\mathcal{A}_{2}|, \,c=|\mathcal{A}_{3}|, \,d=|\mathcal{A}_{2}\cap\mathcal{A}_3|, \,e=|\mathcal{A}_{3}\cap\mathcal{A}_1|, \,f=|\mathcal{A}_{1}\cap\mathcal{A}_2|.$$ If follows from (\ref{ineq2}) that
$$ab+bc+ca+ad+be+cf\le (1+o(1))4^n.$$
We also have by inclusion/exclusion
$$2^n \ge |\mathcal{A}_{1} \cup \mathcal{A}_{2} \cup \mathcal{A}_{3}|
=\sum_i |\mathcal{A}_{i}| - \sum_{i, j} |\mathcal{A}_{i} \cap \mathcal{A}_{j}|$$
which gives $a+b+c \le d+e+f+2^n$.
Thus, we are left to solve the following optimization problem. 
\begin{equation*}
\begin{aligned}
& \max & & abc \\
& \textrm{ s.t.}& & ab+bc+ca+ad+be+cf\leq (1+o(1))4^n\\
& & & a+b+c-d-e-f\leq 2^n\\
& & & d+e\leq c, e+f\leq a, f+d\leq b\\
& & & a,b,c,d,e,f\ge0.
\end{aligned}
\end{equation*}
\medskip

Now, if we rescale the variables in this optimization problem by a factor of $2^n$, that is, write $x'=x/2^n$ for $x\in\{a,b,c,d,e,f\}$, we get
\begin{equation*}
\begin{aligned}
& \max & & a'b'c' \\
& \textrm{ s.t.}& & a'b'+b'c'+c'a'+a'd'+b'e'+c'f'\le 1+\epsilon\\
& & & a'+b'+c'-d'-e'-f'\leq 1\\
& & & d'+e'\le c', e'+f'\le a', f'+d'\le b'\\
& & & a',b',c',d',e',f'\ge0.
\end{aligned}
\end{equation*}
Since $n$ can be made arbitrarily large,  $\epsilon$ is arbitrarily close to zero. So we are left to show the optimization problem with $\epsilon=0$. By solving the KKT conditions (see Appendix for the details), we get
$$\max abc\le (0.13075+o(1))2^{3n}.$$
This concludes the proof. \qed

\section{Concluding remarks}

$\bullet$ Our basic approach is simply to average over a suitable family of partitions. It can be applied to a variety of other extremal problems, for example, it yields some results about cross intersecting families proved by Borg~\cite{B14}. It also applies to the situation when the number of colors is more than the size of the forbidden configuration.  In particular, the proof of Lemma~\ref{thm1} yields the following more general statement.

\begin{lemma} \label{tk}
Given integers $s\ge 1$, $1 \le t \le k$ and  $0\le c\le s-1$, let $n$ be an integer such that $n\ge c+t(s-c)$. For $ i=1,\ldots,k$, let $\mathcal{A}_i\subset \binom{[n]}{s}$ such that $\{\mathcal{A}_i\}_{i=1}^k$ contains no multicolor sunflower with $t$ petals and core size $c$. Then,
$$\sum_{i=1}^k|\mathcal{A}_i|\le\begin{cases} \frac{(t-1)k}{m}\binom{n}{s}, & \mbox{if } c+t(s-c)\le n\le c+k(s-c) \\(t-1)\binom{n}{s}, & \mbox{if }  n\ge c+k(s-c), \end{cases}$$
where $m= \lfloor (n-c)/(s-c)\rfloor$.
\end{lemma}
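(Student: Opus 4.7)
The plan is to adapt the random-partition argument of Lemma~\ref{thm1}, but allow the number of ``petal blocks'' in the partition to depend on the regime of $n$. Set $m' = \min(k, m)$ where $m = \lfloor (n-c)/(s-c) \rfloor$; the hypothesis $n \ge c + t(s-c)$ guarantees $m' \ge t$, while in the first regime of the lemma $m' = m \le k$ and in the second $m' = k \le m$. I would then sample uniformly an ordered partition $[n] = X_1 \cup X_2 \cup X_3 \cup \cdots \cup X_{m'+2}$ subject to $|X_2| = c$, $|X_j| = s-c$ for $3 \le j \le m'+2$, and $|X_1| = n - c - m'(s-c)$ (possibly zero in the first regime).

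Next I would build the bipartite graph $G$ on $\{\mathcal{A}_i\}_{i=1}^k \cup \{X_2 \cup X_j : 3 \le j \le m'+2\}$ with an edge $\{\mathcal{A}_i, X_2 \cup X_j\}$ exactly when $X_2 \cup X_j \in \mathcal{A}_i$. A matching of size $t$ in $G$ would exhibit $t$ distinct families each containing a distinct set of the form $X_2 \cup X_{j_r}$, i.e., a multicolor sunflower with $t$ petals and core $X_2$ of size $c$, which is forbidden. Hence the matching number is at most $t-1$, and K\"onig's theorem yields $|E(G)| \le (t-1)\max(k, m') = (t-1)k$. Linearity of expectation then gives $\mathbb{E}|E(G)| = \sum_i \sum_j \sum_{A \in \mathcal{A}_i} \mathbb{P}(A = X_2 \cup X_j)$, and the multinomial cancellation carried out in the proof of Lemma~\ref{thm1} shows each such probability equals $1/\binom{n}{s}$: the derivation only uses $|X_2| + |X_j| = s$ and the multinomial count of ordered partitions with the prescribed block sizes, so it goes through verbatim even when $|X_1| < s-c$. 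Combining, $\sum_i |\mathcal{A}_i| \le (t-1) k \binom{n}{s}/m'$; substituting $m' = m$ in the first regime and $m' = k$ in the second recovers both claimed bounds.

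The one conceptual point requiring attention is the case split encoded by $m'$: when $n < c + k(s-c)$ there is no room to carve out $k$ disjoint blocks of size $s-c$ as in Lemma~\ref{thm1}, and one must settle for only $m$ such blocks, paying the factor $k/m$ on the right-hand side through the double-counting identity $\mathbb{E}|E(G)| = (m'/\binom{n}{s})\sum_i|\mathcal{A}_i|$. Once this adjustment is identified, every other step---the K\"onig bound on $|E(G)|$, the probability computation, and the passage from expectation to the sum of family sizes---is a routine transcription of the proof of Lemma~\ref{thm1}, with the K\"onig step using $\max(k,m')=k$ (since $t\le k$ and $m'\le k$).
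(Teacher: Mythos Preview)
Your proposal is correct and is precisely the adaptation the paper has in mind: partition $[n]$ into $m'=\min(k,m)$ petal blocks, bound the matching number of the auxiliary bipartite graph by $t-1$, apply K\"onig (using $m'\le k$) to get $|E(G)|\le (t-1)k$, and combine with the same $1/\binom{n}{s}$ probability computation to obtain $\sum_i|\mathcal{A}_i|\le (t-1)k\binom{n}{s}/m'$. The paper does not spell this out but simply remarks that the proof of Lemma~\ref{thm1} yields Lemma~\ref{tk}; your write-up makes the needed modification explicit and checks the details accurately.
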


Note that both upper bounds can be sharp. For the first bound, when 
 $c=0$, $m=t<k$ and $n=ms$, let each $\mathcal{A}_i$ consist of all $s$-sets omitting the element 1. A sunflower with $t=m$ petals and core size $c=0$ is a perfect matching of $[n]$. Since every perfect matching has a set containing 1, there is no multicolor sunflower.  
  Clearly  $\sum_i|\mathcal{A}_i|=k{n-1 \choose s}=((t-1)k/m){n \choose s}$.
For the second bound, we can just  take $t-1$ copies of ${[n] \choose s}$ to achieve equality.

$\bullet$
Another general approach that applies to the sum of the sizes of families was initiated by Keevash-Saks-Sudakov-Verstra\"ete~\cite{KSSV}.  Both methods can be used to solve certain problems.
For example, as pointed out to us by Benny Sudakov, the approach in~\cite{KSSV} can  be used to prove the $k=3$ case of 
Theorem~\ref{thm2}.  On the other hand, we can use our approach to prove the following  that is a very special case of a result of~\cite{KSSV}: if we have graphs $G_1, G_2, G_3$ on vertex set $[n]$ with no multicolored triangle, then
$|G_1|+|G_2|+|G_3| \le 2{n \choose 2}$ provided $n\equiv 1,3$ (mod 6).
Indeed, we just take a Steiner triple system $S$ on $[n]$ and observe (by K\"onig's theorem) that on each triple $e$ of $S$, the sum over $i$ of the number of edges of $G_i$ within $e$ is at most six (the result of~\cite{KSSV} is quite a bit stronger, as it does not require the divisibility requirement and also applies when the number of colors is much larger). The same argument works for larger cliques and even for $r$-uniform hypergraphs, using the recent result of Keevash~\cite{K} on the existence of designs. More precisely, given integers $2\le r<q$ and $n>n_0$ satisfying certain divisibility conditions, if we have $r$-uniform hypergraphs $H_1, \ldots, H_{{q \choose r}}$ on vertex set $[n]$ forming no multicolored $K_q^r$, then $\sum |H_i| \le ({q \choose r}-1){n \choose r}$ by the same proof as for triangles above except we replace a Steiner triple system with an appropriate design which is known to exist by Keevash's result.

$\bullet$
The main idea in the proof of Theorem~\ref{thm3} was to consider the graph parameter $h(G)=m_2(G)+t(G)$. The choice of $h(G)$ was obtained by a search of several parameters for which one could prove good upper bounds like in (\ref{mf}), while also being able to compute the expectation  in (\ref{ineq2}). Perhaps some new ideas will be needed to improve the upper bound further to the conjectured value of 1/8. Carrying out this approach for larger $k$ appears to be difficult.


\appendix
\section{The Optimization Problem in the Proof of Theorem~\ref{thm3}}
We are actually considering the following problem
\begin{equation*}
\begin{aligned}
& \max & & abc \\
& \textrm{ s.t.}& & ab+bc+ca+ad+be+cf-1\le 0\\
& & & a+b+c-d-e-f-1\leq 0\\
& & & a,b,c,d,e,f\geq0.
\end{aligned}
\end{equation*}
We will see that the optimal solution to this problem satisfies all the constraints in the original problem, hence solves the original problem.

We consider the KKT conditions (see, for example~\cite{BV04}) for this problem. Let $(a,b,c,d,e,f)$ be an optimal solution to the problem, and $\mu_i\ge 0, i=1,\ldots,8$ be the dual variable associated with each constraint above respectively. We have the following stationary (gradient) condition.
\begin{equation*}
\begin{split}
\begin{pmatrix}
  bc  \\ ca\\ ab\\0\\0\\0
 \end{pmatrix}
 =&\mu_1\begin{pmatrix}
 b+c+d\\a+c+e\\b+a+f\\a\\b\\c
 \end{pmatrix}
 +\mu_2\begin{pmatrix}
  1\\1\\1\\-1\\-1\\-1
 \end{pmatrix}
  +\mu_3\begin{pmatrix}
 -1\\0\\0\\0\\0\\0
 \end{pmatrix}
  +\mu_4\begin{pmatrix}
 0\\-1\\0\\0\\0\\0
 \end{pmatrix}\\
  &+\mu_5\begin{pmatrix}
 0\\0\\-1\\0\\0\\0
 \end{pmatrix}
  +\mu_6\begin{pmatrix}
 0\\0\\0\\-1\\0\\0
 \end{pmatrix}
 +\mu_7\begin{pmatrix}
 0\\0\\0\\0\\-1\\0
 \end{pmatrix}+\mu_8\begin{pmatrix}
 0\\0\\0\\0\\0\\-1
 \end{pmatrix}.
\end{split}
\end{equation*}
The complimentary slackness conditions are the following.
\begin{equation*}
\begin{split}
\mu_1(ab+bc+ca+ad+be+cf-1)&=0\\
\mu_2(a+b+c-d-e-f-1)&=0\\
\mu_3a=\mu_4b=\mu_5c=\mu_6d=\mu_7e=\mu_8f&=0.
\end{split}
\end{equation*}
First notice that clearly the maximum of $abc$ should be positive, so in any optimal solution we have $a,b,c>0$. By complimentary slackness conditions $\mu_3 a=\mu_4 b=\mu_5 c=0$, we get $\mu_3=\mu_4=\mu_5=0$.

Now, suppose that $d=e=f=0$, then the problem is reduced to maximizing $abc$, subject to $ab+bc+ca\le 1, a+b+c\le 1,$ and $a,b,c>0$. It is easy to see that in this case $a=b=c=1/3$ solves the problem, with the maximum $1/27$. Therefore, we may assume without loss of generality that $d>0$, so by complimentary slackness, $\mu_6=0.$ Thus, we have reduced the stationary condition into the following form.

$$\begin{pmatrix}
  bc  \\ ca\\ ab\\0\\0\\0
 \end{pmatrix}
 =\mu_1\begin{pmatrix}
 b+c+d\\a+c+e\\b+a+f\\a\\b\\c
 \end{pmatrix}
 +\mu_2\begin{pmatrix}
  1\\1\\1\\-1\\-1\\-1
 \end{pmatrix}
 +\begin{pmatrix}
 0\\0\\0\\0\\-\mu_7\\-\mu_8
 \end{pmatrix}.
 $$
If $\mu_1=0$, then we get that $\mu_2=0$ also, then $a=b=c=0$, a contradiction. This shows that $\mu_1>0$. Now suppose that $\mu_2=0$.  Then we get $a=\mu_2/\mu_1=0$ which makes the maximum also 0, so we must have $\mu_2>0$. By complimentary slackness, the first two constraints both hold with equality. Next, we use the stationary condition to express $a,b,c$ in terms of $\mu_1,\mu_2,\mu_7$ and $\mu_8$.
\begin{equation}
a=\frac{\mu_2}{\mu_1},\qquad b=\frac{\mu_2+\mu_7}{\mu_1},\qquad c=\frac{\mu_2+\mu_8}{\mu_1}. \label{abcd}
\end{equation}
\paragraph{Case 1. $\mu_7=\mu_8=0$.} We get $a=b=c=\mu_2/\mu_1$. In this case, we are solving the maximization of $a^3$, subject to $3a^2+ax=1, 3a-x=1,$ and $a,x>0$, where $x=d+e+f$. The optimality is obtained at $a=x=1/2$, with maximum $1/8$.

\paragraph{Case 2. Exactly one of $\mu_7$ and $\mu_8$ is positive.} Without loss of generality, we may assume $\mu_7>0$, which implies that $e=0$. Since in this case $\mu_8=0$, we get $a=c$ from~(\ref{abcd}), so the problem is reduced to
\begin{equation*}
\begin{aligned}
& \max & & a^2b \\
& \textrm{ s.t.}& & 2ab+a^2+ax-1= 0\\
& & & 2a+b-x-1= 0\\
& & & a,b,x>0
\end{aligned}
\end{equation*}
where $x=d+f$. This problem has the maximum $\frac{1}{729} \left(29+20 \sqrt{10}\right)\approx0.126537$ at $$a=\frac{1}{9} \left(1+\sqrt{10}\right)\approx0.462475,\qquad b= -\frac{-29-20 \sqrt{10}}{9 \left(1+\sqrt{10}\right)^2}\approx0.591617,$$ and $$x= -\frac{81 \left(-\frac{1}{729} 2 \left(1+\sqrt{10}\right)^3+\frac{1}{81} \left(1+\sqrt{10}\right)^2+\frac{1}{729} \left(-29-20 \sqrt{10}\right)\right)}{\left(1+\sqrt{10}\right)^2}\approx0.516568.$$

\paragraph{Case 3. Both $\mu_7,\mu_8>0$.}  This implies that $e=f=0$. So from the stationary condition, we get
\begin{equation*}
\begin{aligned}
&ac=\mu_1(a+c)+\mu_2\\
&ab=\mu_1(a+b)+\mu_2.\\
\end{aligned}
\end{equation*}
By~(\ref{abcd}), we can eliminate the variables $a,b,c$ to get
\begin{equation*}
\begin{aligned}
&\mu_2(\mu_2+\mu_8)=\mu_1^2(\mu_2+(\mu_2+\mu_8))+\mu_1^2\mu_2\\
&\mu_2(\mu_2+\mu_7)=\mu_1^2(\mu_2+(\mu_2+\mu_7))+\mu_1^2\mu_2.\\
\end{aligned}
\end{equation*}
A bit of algebra shows that $\mu_7=\mu_8$, which implies $b=c$. Thus the problem is reduced to
\begin{equation*}
\begin{aligned}
& \max & & ab^2 \\
& \textrm{ s.t.}& & 2ab+b^2+ad-1= 0\\
& & & a+2b-d-1= 0\\
& & & a,b,d>0
\end{aligned}
\end{equation*}

Similar to the previous cases, one can solve system using the method of Lagrange multipliers as follows. We first eliminate variable $d$ to get $2ab+b^2+a(a+2b-1)-1=0$. Then define the Lagrangian as
$$L(a,b,\lambda)=ab^2+\lambda(a^2+4ab+b^2-a-1).$$
To find the maximum, we need to solve the following system of equations.
\begin{equation*}
\begin{aligned}
\frac{\partial L}{\partial a}&=b^2+2\lambda a+4\lambda b-\lambda=0\\
\frac{\partial L}{\partial b}&=2ab+4\lambda a+2\lambda b=0\\
\frac{\partial L}{\partial \lambda}&=a^2+4ab+b^2-a-1=0.
\end{aligned}
\end{equation*}
One can obtain exact closed form solutions  to this system involving radicals, however, the formulas are  too long to display, so we give the approximation as follows. The maximum is approximately equal to $0.130748$, with $a\approx0.37478, b\approx0.590649, \lambda\approx-0.165171$. This corresponds to the following solution to the original problem:
$$a \approx0.37478,\qquad b=c \approx0.590649,\qquad d\approx0.556078,\qquad e=f=0.$$
Comparing with all the other cases, this is the actual maximum that $abc$ can achieve. Moreover, this solution satisfies the constraints $d+e\le c, e+f\le a$ and $f+d\le b$, hence is the optimal solution to the original problem in the proof of Theorem~\ref{thm3} as well.
\end{document}